\documentclass[english]{article}
\usepackage[T1]{fontenc}
\usepackage[latin9]{inputenc}
\usepackage{amsmath}
\usepackage{amsthm}

\makeatletter
\theoremstyle{plain}
\newtheorem{thm}{\protect\theoremname}
\theoremstyle{plain}
\newtheorem{lem}[thm]{\protect\lemmaname}
\ifx\proof\undefined
\newenvironment{proof}[1][\protect\proofname]{\par
	\normalfont\topsep6\p@\@plus6\p@\relax
	\trivlist
	\itemindent\parindent
	\item[\hskip\labelsep\scshape #1]\ignorespaces
}{%
	\endtrivlist\@endpefalse
}
\providecommand{\proofname}{Proof}
\fi
\theoremstyle{plain}
\newtheorem{cor}[thm]{\protect\corollaryname}
\newcommand{\lyxaddress}[1]{
	\par {\raggedright #1
	\vspace{1.4em}
	\noindent\par}
}

\@ifundefined{date}{}{\date{}}
\makeatother

\usepackage{babel}
\providecommand{\corollaryname}{Corollary}
\providecommand{\lemmaname}{Lemma}
\providecommand{\theoremname}{Theorem}

\begin{document}
\title{Biharmonic hypersurfaces in hemispheres}
\author{Matheus Vieira}
\maketitle
\begin{abstract}
In this paper we consider the Balmu\c{s}-Montaldo-Oniciuc's conjecture
in the case of hemispheres. We prove that a compact non-minimal biharmonic
hypersurface in a hemisphere of $S^{n+1}$ must be the small hypersphere
$S^{n}\left(1/\sqrt{2}\right)$, provided that $n^{2}-H^{2}$ does
not change sign.
\end{abstract}

\section{Introduction}

It is well known that minimal hypersurfaces can be seen as hypersurfaces
whose canonical inclusion is a harmonic map. Thus it is natural to
study hypersurfaces whose canonical inclusion is a biharmonic map,
known as biharmonic hypersurfaces (for more information see Section
2). From the point of view of finding new examples and classification
results, the theory of biharmonic hypersurfaces seems to be more interesting
when the ambient space has positive curvature. There are many papers
studying biharmonic hypersurfaces in the sphere (for example \cite{BMO2008},
\cite{BO2012}, \cite{CMO2001}, \cite{CMO2002}, \cite{C1993}, \cite{FH2018},
\cite{LM2017}, \cite{M2017}, \cite{On2003}, \cite{On2012}).

Let $S^{n+1}$ be the unit Euclidean sphere. Balmu\c{s}-Montaldo-Oniciuc
\cite{BMO2008} conjectured that a non-minimal biharmonic hypersurface
in $S^{n+1}$ must be an open part of the small hypersphere $S^{n}\left(1/\sqrt{2}\right)$
of radius $1/\sqrt{2}$ or of a generalized Clifford torus $S^{k}\left(1/\sqrt{2}\right)\times S^{n-k}\left(1/\sqrt{2}\right)$
with $k\neq n/2$. Since a generalized Clifford torus cannot lie in
a hemisphere, it is natural to ask whether a compact non-minimal biharmonic
hypersurface in a hemisphere must be the small hypersphere. We give
an affirmative answer to this question when $n^{2}-H^{2}$ does not
change sign.
\begin{thm}
\label{thm:main}Let $M^{n}$ be a compact biharmonic hypersurface
in a closed hemisphere of $S^{n+1}$. If $n^{2}-H^{2}$ does not change
sign then either $M^{n}$ is the equator of the hemisphere or it is
the small hypersphere $S^{n}\left(1/\sqrt{2}\right)$.
\end{thm}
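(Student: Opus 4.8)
The plan is to combine the two biharmonic equations with the geometry of the hemisphere into a single integral identity. Write $x$ for the immersion and $N$ for a unit normal, let $A$ be the shape operator (so $H=\operatorname{tr}A$ and $|A|^{2}=\operatorname{tr}A^{2}$), and fix the unit vector $a\in\mathbf{R}^{n+2}$ defining the closed hemisphere, so that $u:=\langle x,a\rangle\ge 0$ on $M$. Set $v:=\langle N,a\rangle$. Recall from Section 2 the biharmonic system, which reads $\Delta H=(|A|^{2}-n)H$ together with the tangential equation $2A(\nabla H)=-H\nabla H$. The two ``support functions'' satisfy $\nabla u=a^{\top}$ and $\nabla v=-A(\nabla u)$, together with
\begin{align}
\Delta u &= Hv-nu,\\
\Delta v &= -|A|^{2}v+Hu-\langle\nabla H,\nabla u\rangle,
\end{align}
which follow from $\Delta x=HN-nx$ and the analogous Weingarten/Codazzi computation for $\Delta N$. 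First I would record these and observe that $\int_{M}(Hv-nu)=0$, so $\int_{M}Hv=n\int_{M}u$.

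The crux is the clean identity
\begin{equation}
\Delta(Hv)=H^{2}u-nHv.
\end{equation}
To obtain it I expand $\Delta(Hv)=H\Delta v+v\Delta H+2\langle\nabla H,\nabla v\rangle$ and substitute the formulas above. The decisive point, and the one step that genuinely uses \emph{both} biharmonic equations, is that the tangential equation gives $\langle\nabla H,\nabla v\rangle=-\langle A\nabla H,\nabla u\rangle=\tfrac{H}{2}\langle\nabla H,\nabla u\rangle$; thus the term $-H\langle\nabla H,\nabla u\rangle$ coming from $H\Delta v$ and the term $+2\langle\nabla H,\nabla v\rangle$ cancel, while the terms $-H|A|^{2}v$ and $+H|A|^{2}v$ (from $H\Delta v$ and $v\Delta H$) cancel as well, leaving exactly $H^{2}u-nHv$. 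This cancellation is where I expect the main difficulty to lie: without the tangential equation one is left with uncontrolled first-order terms and no clean identity. Integrating over the closed manifold $M$ gives $\int_{M}H^{2}u=n\int_{M}Hv$, and combining with $\int_{M}Hv=n\int_{M}u$ yields the key relation
\begin{equation}
\int_{M}\left(H^{2}-n^{2}\right)u=0.
\end{equation}

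Now the hypothesis enters in exactly the right place. Since $u\ge 0$ and $n^{2}-H^{2}$ does not change sign, the integrand $(H^{2}-n^{2})u$ has a fixed sign, hence vanishes identically: at each point either $u=0$ or $H^{2}=n^{2}$. A biharmonic immersion is real-analytic, so $u$ is real-analytic; therefore, if $\{H^{2}=n^{2}\}\ne M$, the open set where $H^{2}\ne n^{2}$ forces $u\equiv 0$ there and then, by unique continuation on the connected $M$, everywhere, so $M$ lies in the equator $\{u=0\}$ (a totally geodesic $S^{n}$) and, being a compact $n$-dimensional hypersurface, equals it. Otherwise $H^{2}\equiv n^{2}$, so $H$ is a nonzero constant; then $\Delta H=0$ forces $|A|^{2}=n$, whence $(\operatorname{tr}A)^{2}=H^{2}=n^{2}=n|A|^{2}$ is the equality case of the Cauchy--Schwarz inequality, $A$ is umbilic with principal curvature $1$, and $M$ is the geodesic sphere $S^{n}(1/\sqrt{2})$. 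This gives the dichotomy asserted in Theorem \ref{thm:main}.
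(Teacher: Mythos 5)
Your argument is correct and is, at its core, the same as the paper's: your identity $\Delta(Hv)=H^{2}u-nHv$ is exactly Theorem \ref{thm:bilaplacianrestriction} specialized to $\bar f=\langle X,V\rangle$ (note $\Delta\Delta f=-n\Delta f+\Delta(Hv)$), and the resulting relation $\int_{M}(n^{2}-H^{2})u=0$, obtained by integrating $\Delta u$ and $\Delta(Hv)$ over $M$, is precisely the paper's key identity; the cancellation you isolate via $2A(\nabla H)=-H\nabla H$ plays the same role as the term $\langle B^{T},\bar\nabla\bar f\rangle$ there. The only substantive divergence is the endgame when $H^{2}\not\equiv n^{2}$: the paper observes that the support function vanishes on an open set, so an open piece of $M$ lies in the totally geodesic equator and is minimal, and then invokes the Branding--Oniciuc unique continuation theorem for biharmonic maps to conclude that all of $M$ is minimal, hence the equator. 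You instead assert that biharmonic immersions are real-analytic and propagate $u\equiv0$ by analytic continuation. That assertion is the one step you should not leave bare: analyticity of a biharmonic \emph{isometric immersion} does not follow immediately from elliptic theory for biharmonic maps, since the domain metric is the induced one rather than a fixed analytic background; it can be justified, or sidestepped entirely by citing the unique continuation theorem as the paper does, but as written it is the only unsupported link in your chain. On the other hand, your treatment of the case $H^{2}\equiv n^{2}$ via $\Delta H=0$, $|A|^{2}=n$ and equality in Cauchy--Schwarz is a clean, self-contained replacement for the paper's citation of Theorem 2.10 in \cite{BMO2008}.
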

Note that $n^{2}-H^{2}\geq0$ when $\left|A\right|^{2}\leq n$ (by
the Cauchy-Schwarz inequality) or $H$ is constant (by Theorem \ref{thm:biharmonicequations}).
This is related to some previous results (see for example Section
1.4 in \cite{On2012}).

The case $n^{2}-H^{2}\leq0$ was proved in a direct way by Balmu\c{s}-Oniciuc
(Corollary 3.3 in \cite{BO2012}). Our proof is completely different.
It is based on a formula for the bilaplacian of the restriction of
a function defined on the ambient space (Theorem \ref{thm:bilaplacianrestriction}).

The author would like to thank Detang Zhou, Cezar Oniciuc and Dorel
Fetcu for their support.

\section{Preliminaries}

First we give the notations and conventions of the paper.

Let $M$ be a Riemannian manifold and let $\nabla$ be the Levi-Civita
connection. We define the Riemann curvature by
\[
Riem\left(u,v\right)w=\nabla_{u}\nabla_{v}w-\nabla_{v}\nabla_{u}w-\nabla_{\left[u,v\right]}w.
\]
If $f$ is a function on $M$ we define the Hessian of $f$ by
\[
\nabla\nabla f\left(u,v\right)=\left\langle \nabla_{u}\nabla f,v\right\rangle ,
\]
and the Laplacian of $f$ by
\[
\Delta f=tr_{M}\nabla\nabla f.
\]

Let $\bar{M}^{n+1}$ be a Riemannian manifold and let $M^{n}$ be
a hypersurface in $\bar{M}^{n+1}$. We define the second fundamental
form by
\[
A\left(u,v\right)=\left\langle \bar{\nabla}_{u}v,N\right\rangle ,
\]
and the mean curvature by
\[
H=tr_{M}A,
\]
where $N$ is the normal vector. Note that $H$ is not normalized.
We denote the geometric quantities of $\bar{M}^{n+1}$ with a bar.
For example $\bar{\nabla}$ is the Levi-Civita connection of $\bar{M}^{n+1}$,
$\bar{R}iem$ is the Riemann curvature of $\bar{M}^{n+1}$ and $\bar{R}ic$
is the Ricci curvature of $\bar{M}^{n+1}$. We denote the geometric
quantities of $M^{n}$ without a bar. For example $\nabla$ is the
Levi-Civita connection of $M^{n}$ and $\Delta$ is the Laplacian
of $M^{n}$.

Now we recall the definition of biharmonic maps and hypersurfaces.

Let $\phi:M\to N$ be a map between Riemannian manifolds. We say that
$\phi$ is harmonic if it is a critical point of the functional 
\[
E\left(\phi\right)=\int_{M}\left|d\phi\right|^{2}.
\]
Critical points of $E$ satisfy $\tau\left(\phi\right)=0$, where
\[
\tau\left(\phi\right)=tr\nabla d\phi.
\]
We say that $\phi$ is biharmonic if it is a critical point of the
functional
\[
E_{2}\left(\phi\right)=\int_{M}\left|\tau\left(\phi\right)\right|^{2}.
\]
Critical points of $E_{2}$ satisfy $\tau_{2}\left(\phi\right)=0$,
where
\[
\tau_{2}\left(\phi\right)=\Delta\tau\left(\phi\right)+tr\bar{R}iem\left(\tau\left(\phi\right),d\phi\right)d\phi.
\]
Now let $\bar{M}^{n+1}$ be a Riemannian manifold and let $M^{n}$
be a hypersurface in $\bar{M}^{n+1}$. It is well known that $M^{n}$
is minimal if and only if the canonical inclusion is a harmonic map.
We say that $M^{n}$ is biharmonic if the canonical inclusion is a
biharmonic map. For more information about harmonic and biharmonic
maps and submanifolds see \cite{ES1964}, \cite{J1986A}, \cite{J1986B}.

In the rest of this section we obtain a formula for the bilaplacian
of the restriction of a function defined on the ambient space.

The next result was applied to submanifolds of the sphere for the
first time by Oniciuc (Theorem 3.1 in \cite{On2003}). Later it was
applied to hypersurfaces in general ambient spaces by Ou (Theorem
2.1 in \cite{Ou2010}). See also Remark 4.10 in \cite{LMO2008}. This
is an important result in the theory of biharmonic submanifolds.
\begin{thm}
\label{thm:biharmonicequations}(\cite{On2003}, \cite{Ou2010}) Let
$\bar{M}^{n+1}$ be a Riemannian manifold and let $M^{n}$ be a hypersurface
in $\bar{M}^{n+1}$. Then $M^{n}$ is biharmonic if and only if $B^{N}$
and $B^{T}$ vanish, where
\[
B^{N}=\Delta H-H\left|A\right|^{2}+H\bar{R}ic\left(N,N\right),
\]
and
\[
B^{T}=2A\left(\nabla H\right)+H\nabla H-2H\left(\bar{R}ic\left(N\right)\right)^{T}.
\]
\end{thm}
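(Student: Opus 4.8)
The plan is to compute the bitension field of the canonical inclusion $\phi\colon M^{n}\to\bar{M}^{n+1}$ explicitly and to separate it into its parts normal and tangent to $M$; since $M$ is biharmonic exactly when $\tau_{2}(\phi)=0$, and a vector field vanishes iff both parts vanish, this reduces the claim to identifying the two parts with $B^{N}N$ and $-B^{T}$. First I would record the tension field. As $\phi$ is an isometric immersion, $\nabla d\phi$ equals the vector second fundamental form, so by the Gauss formula $\tau(\phi)=\sum_{i}(\bar{\nabla}_{e_{i}}e_{i}-\nabla_{e_{i}}e_{i})=\sum_{i}A(e_{i},e_{i})N=HN$ for a local orthonormal frame $\{e_{i}\}$ on $M$ with unit normal $N$. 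Hence
\[
\tau_{2}(\phi)=\Delta(HN)+H\sum_{i}\bar{R}iem(N,e_{i})e_{i},
\]
where $\Delta=\sum_{i}(\bar{\nabla}_{e_{i}}\bar{\nabla}_{e_{i}}-\bar{\nabla}_{\nabla_{e_{i}}e_{i}})$ is the rough Laplacian on $\phi^{-1}T\bar{M}$, taken with the same sign convention as $tr_{M}\nabla\nabla$.

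Next I would expand $\Delta(HN)$ at a point, choosing the frame geodesic there so that $\nabla_{e_{i}}e_{j}=0$. Writing $S$ for the shape operator ($\langle Se_{i},e_{j}\rangle=A(e_{i},e_{j})$, so $A(\nabla H)=S(\nabla H)$), the Weingarten relation $\bar{\nabla}_{e_{i}}N=-Se_{i}$ (valid because the unit normal is parallel in the normal bundle) together with the Gauss formula $\bar{\nabla}_{e_{i}}(Se_{i})=\nabla_{e_{i}}(Se_{i})+A(e_{i},Se_{i})N$ produces, after tracing,
\[
\Delta(HN)=(\Delta H-H\left|A\right|^{2})N-2A(\nabla H)-H\,\mathrm{div}\,S,
\]
namely $(\Delta H)N$ from the scalar part, two equal cross terms summing to $-2S(\nabla H)=-2A(\nabla H)$, the normal piece $-HA(e_{i},Se_{i})N$ with trace $-H\left|A\right|^{2}N$, and the tangential piece $-H(\nabla_{e_{i}}S)e_{i}$ with trace $-H\,\mathrm{div}\,S$.

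The decisive step is to bring in the curvature. With the paper's sign convention the Codazzi equation reads $(\nabla_{X}S)Y-(\nabla_{Y}S)X=-(\bar{R}iem(X,Y)N)^{T}$, and tracing it using the symmetries of $\bar{R}iem$ gives $\mathrm{div}\,S=\nabla H-(\bar{R}ic(N))^{T}$, while the zeroth-order term decomposes as $\sum_{i}\bar{R}iem(N,e_{i})e_{i}=\bar{R}ic(N,N)N+(\bar{R}ic(N))^{T}$. Substituting both into $\tau_{2}(\phi)$, the two tangential Ricci terms \emph{add} rather than cancel, and one obtains
\[
\tau_{2}(\phi)=(\Delta H-H\left|A\right|^{2}+H\bar{R}ic(N,N))N-(2A(\nabla H)+H\nabla H-2H(\bar{R}ic(N))^{T}),
\]
that is $\tau_{2}(\phi)=B^{N}N-B^{T}$, so $M$ is biharmonic iff $B^{N}=0$ and $B^{T}=0$.

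I expect the main obstacle to be precisely this last reconciliation of signs and of the tangential-versus-normal splitting: the tangential Ricci contribution enters once through $\mathrm{div}\,S$ (via Codazzi) and once through $\sum_{i}\bar{R}iem(N,e_{i})e_{i}$, and only if the Codazzi sign is correct do they reinforce to yield the factor $-2H(\bar{R}ic(N))^{T}$ in $B^{T}$; a single mismatched convention among $\bar{R}iem$, the Codazzi equation, and the definition of $\tau_{2}$ would make them cancel and produce the wrong tangential equation. Everything else is routine tensor bookkeeping in the chosen geodesic frame.
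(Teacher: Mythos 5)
The paper offers no proof of this statement; it is quoted from \cite{On2003} and \cite{Ou2010}, so there is nothing internal to compare against. Your derivation is the standard one and it checks out with the paper's conventions: with $Riem(u,v)w=\nabla_{u}\nabla_{v}w-\nabla_{v}\nabla_{u}w-\nabla_{[u,v]}w$ and $A(u,v)=\langle\bar{\nabla}_{u}v,N\rangle$ one indeed gets the Codazzi identity $\operatorname{div}S=\nabla H-(\bar{R}ic(N))^{T}$ and the decomposition $\sum_{i}\bar{R}iem(N,e_{i})e_{i}=\bar{R}ic(N,N)N+(\bar{R}ic(N))^{T}$, so the two tangential Ricci terms reinforce and $\tau_{2}(\phi)=B^{N}N-B^{T}$ --- which is also exactly the combination $B^{N}N-B^{T}$ that appears in the paper's Theorem \ref{thm:bilaplacianrestriction}, a useful consistency check. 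The only point worth stating explicitly is the sign convention for the rough Laplacian in $\tau_{2}$ (trace of the second covariant derivative, matching $\Delta f=tr_{M}\nabla\nabla f$), which you do use correctly.
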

Here we identify $\left(1,1\right)$ tensors and $\left(0,2\right)$
tensors, and $\left(\cdot\right)^{T}$ is the projection on $TM^{n}$.
In particular
\[
A\left(\nabla H\right)=\sum_{i=1}^{n}A\left(\nabla H,e_{i}\right)e_{i},
\]
and
\[
\left(\bar{R}ic\left(N\right)\right)^{T}=\sum_{i=1}^{n}\bar{R}ic\left(N,e_{i}\right)e_{i},
\]
where $\left\{ e_{i}\right\} _{i=1}^{n}$ is a local orthonormal frame
on $M^{n}$.

The next result is well known. We prove it for the sake of completeness.
\begin{lem}
\label{lem:hessianrestriction}Let $\bar{M}^{n+1}$ be a Riemannian
manifold and let $M^{n}$ be a hypersurface in $\bar{M}^{n+1}$. If
$\bar{f}$ is a function on $\bar{M}^{n+1}$ and $f=\bar{f}|M^{n}$
then
\[
\nabla\nabla f\left(u,v\right)=\bar{\nabla}\bar{\nabla}\bar{f}\left(u,v\right)+\left\langle \bar{\nabla}\bar{f},N\right\rangle A\left(u,v\right).
\]
In particular
\[
\Delta f=tr_{M}\bar{\nabla}\bar{\nabla}\bar{f}+\left\langle \bar{\nabla}\bar{f},N\right\rangle H.
\]
\end{lem}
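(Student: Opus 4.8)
The plan is to compute directly from the definition $\nabla\nabla f\left(u,v\right)=\left\langle \nabla_{u}\nabla f,v\right\rangle$, transferring the intrinsic gradient and connection of $M^{n}$ to the ambient ones by orthogonal decomposition and the Gauss formula. First I would record the basic fact that, since $f=\bar{f}|M^{n}$, every tangent vector $v$ satisfies $vf=v\bar{f}=\left\langle \bar{\nabla}\bar{f},v\right\rangle$; hence the intrinsic gradient is the tangential part of the ambient one, and we may write $\bar{\nabla}\bar{f}=\nabla f+\left\langle \bar{\nabla}\bar{f},N\right\rangle N$ along $M^{n}$.

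Next I would use the Gauss formula $\bar{\nabla}_{u}X=\nabla_{u}X+A\left(u,X\right)N$, valid for $u$ and $X$ tangent to $M^{n}$, where $A$ is the second fundamental form of the paper. Pairing against a tangent vector $v$ kills the normal term, so $\left\langle \nabla_{u}X,v\right\rangle =\left\langle \bar{\nabla}_{u}X,v\right\rangle$. Taking $X=\nabla f$ gives $\nabla\nabla f\left(u,v\right)=\left\langle \bar{\nabla}_{u}\nabla f,v\right\rangle$. Now I would substitute $\nabla f=\bar{\nabla}\bar{f}-\left\langle \bar{\nabla}\bar{f},N\right\rangle N$ and differentiate: the derivative of the scalar factor produces a multiple of $N$, which is annihilated by the pairing with $v$, so only $\left\langle \bar{\nabla}_{u}\bar{\nabla}\bar{f},v\right\rangle -\left\langle \bar{\nabla}\bar{f},N\right\rangle \left\langle \bar{\nabla}_{u}N,v\right\rangle$ survives.

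Finally I would identify the two remaining pieces. The first is exactly the ambient Hessian $\bar{\nabla}\bar{\nabla}\bar{f}\left(u,v\right)$. For the second, differentiating $\left\langle N,v\right\rangle =0$ and using the paper's convention $A\left(u,v\right)=\left\langle \bar{\nabla}_{u}v,N\right\rangle$ gives $\left\langle \bar{\nabla}_{u}N,v\right\rangle =-A\left(u,v\right)$; the two sign changes combine to yield the term $+\left\langle \bar{\nabla}\bar{f},N\right\rangle A\left(u,v\right)$, which is the claimed Hessian identity. Tracing over a local orthonormal frame $\left\{ e_{i}\right\}$ and using $H=tr_{M}A$ then gives the Laplacian formula. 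There is no genuine obstacle here, as the statement is standard; the only point demanding care is consistency with the paper's conventions — the unnormalized mean curvature and the sign of $A$ — so at the shape-operator step I would double-check that $\left\langle \bar{\nabla}_{u}N,v\right\rangle =-A\left(u,v\right)$ rather than $+A\left(u,v\right)$, since an error there would propagate a wrong sign into the final formula.
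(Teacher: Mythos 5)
Your argument is correct, including the sign bookkeeping: with the paper's convention $A\left(u,v\right)=\left\langle \bar{\nabla}_{u}v,N\right\rangle $, differentiating $\left\langle N,v\right\rangle =0$ indeed gives $\left\langle \bar{\nabla}_{u}N,v\right\rangle =-A\left(u,v\right)$, and the two minus signs cancel to produce the term $+\left\langle \bar{\nabla}\bar{f},N\right\rangle A\left(u,v\right)$. The route differs slightly from the paper's: you work invariantly, decomposing $\bar{\nabla}\bar{f}=\nabla f+\left\langle \bar{\nabla}\bar{f},N\right\rangle N$, passing through the Gauss formula to replace $\nabla_{u}\nabla f$ by $\bar{\nabla}_{u}\nabla f$ when paired with a tangent vector, and then invoking the Weingarten relation. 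The paper instead fixes a point, chooses a frame with $\nabla_{e_{i}}e_{j}=0$ there, writes both Hessians as $e_{i}e_{j}\bar{f}$ minus connection corrections, and reads off the difference as $\left\langle \bar{\nabla}\bar{f},\left(\bar{\nabla}_{e_{i}}e_{j}\right)^{\perp}\right\rangle $; it never needs the Weingarten equation or the gradient decomposition. Your version is coordinate-free and makes the geometric mechanism (the normal component of the gradient being differentiated against the turning of $N$) explicit, while the paper's is a shorter pointwise computation; both are complete and equivalent in content.
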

\begin{proof}
Let $\left\{ e_{i}\right\} $ be a local orthonormal frame on $M$
such that $\nabla_{e_{i}}e_{j}=0$ at a fixed point of $M$. At this
point we have
\begin{align*}
\nabla\nabla f\left(e_{i},e_{j}\right) & =e_{i}e_{j}f\\
 & =e_{i}e_{j}\bar{f},
\end{align*}
and
\begin{align*}
\bar{\nabla}\bar{\nabla}\bar{f}\left(e_{i},e_{j}\right) & =e_{i}e_{j}\bar{f}-\left\langle \bar{\nabla}\bar{f},\left(\bar{\nabla}_{e_{i}}e_{j}\right)^{\perp}\right\rangle \\
 & =e_{i}e_{j}\bar{f}-\left\langle \bar{\nabla}\bar{f},N\right\rangle A\left(e_{i},e_{j}\right).
\end{align*}
The result follows from these equations.
\end{proof}
The proof of Theorem \ref{thm:main} is based on next result, which
may be of independent interest.
\begin{thm}
\label{thm:bilaplacianrestriction}Let $\bar{M}^{n+1}$ be a Riemannian
manifold and let $M^{n}$ be a hypersurface in $\bar{M}^{n+1}$. If
$\bar{f}$ is a function on $\bar{M}^{n+1}$ and $f=\bar{f}|M^{n}$
then
\begin{align*}
\Delta\Delta f & =\Delta\left(tr_{M}\bar{\nabla}\bar{\nabla}\bar{f}\right)+Htr_{M}\left(\bar{\nabla}_{N}\bar{\nabla}\bar{\nabla}\bar{f}\right)+H^{2}\bar{\nabla}\bar{\nabla}\bar{f}\left(N,N\right)-2H\left\langle \bar{\nabla}\bar{\nabla}\bar{f},A\right\rangle \\
 & +2\bar{\nabla}\bar{\nabla}\bar{f}\left(\nabla H,N\right)+\left\langle B^{N}N-B^{T},\bar{\nabla}\bar{f}\right\rangle .
\end{align*}
\end{thm}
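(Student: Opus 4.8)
The plan is to start from Lemma \ref{lem:hessianrestriction}, which gives
\[
\Delta f = tr_{M}\bar{\nabla}\bar{\nabla}\bar{f} + gH,\qquad g:=\left\langle \bar{\nabla}\bar{f},N\right\rangle ,
\]
and simply apply the Laplacian a second time. The summand $\Delta\left(tr_{M}\bar{\nabla}\bar{\nabla}\bar{f}\right)$ reproduces the first term of the statement verbatim, so the entire content is the identity for $\Delta\left(gH\right)$. I would expand it by the product rule $\Delta\left(gH\right)=g\Delta H+H\Delta g+2\left\langle \nabla g,\nabla H\right\rangle $ and handle the three pieces separately before regrouping. Throughout, $M$ need not be biharmonic; the point is that the combinations $B^{N}$, $B^{T}$ defined in Theorem \ref{thm:biharmonicequations} emerge naturally and carry all the terms that must be fed back into the formula.

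For $\nabla g$ and $\Delta g$ I would fix a point, take a local orthonormal frame $\left\{ e_{i}\right\} $ geodesic there, and differentiate $g=\left\langle \bar{\nabla}\bar{f},N\right\rangle $ with the ambient connection, using the Gauss formula $\bar{\nabla}_{e_{i}}e_{j}=\nabla_{e_{i}}e_{j}+A\left(e_{i},e_{j}\right)N$ and the Weingarten formula $\bar{\nabla}_{e_{i}}N=-A\left(e_{i}\right)$. The first derivative is
\[
\left\langle \nabla g,u\right\rangle =\bar{\nabla}\bar{\nabla}\bar{f}\left(u,N\right)-A\left(u,\left(\bar{\nabla}\bar{f}\right)^{T}\right),
\]
which immediately converts $2\left\langle \nabla g,\nabla H\right\rangle $ into $2\bar{\nabla}\bar{\nabla}\bar{f}\left(\nabla H,N\right)$ together with $-2\left\langle A\left(\nabla H\right),\bar{\nabla}\bar{f}\right\rangle $, the latter being exactly the $A\left(\nabla H\right)$ contribution of $-\left\langle B^{T},\bar{\nabla}\bar{f}\right\rangle $. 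Differentiating once more and tracing yields $\Delta g$ as a sum of a traced third derivative $\sum_{i}\left(\bar{\nabla}_{e_{i}}\bar{\nabla}\bar{\nabla}\bar{f}\right)\left(e_{i},N\right)$, the terms $H\bar{\nabla}\bar{\nabla}\bar{f}\left(N,N\right)$, $-2\left\langle \bar{\nabla}\bar{\nabla}\bar{f},A\right\rangle $ and $-\left|A\right|^{2}g$, and a divergence-of-$A$ term $-\sum_{j}\left[\sum_{i}\left(\nabla_{e_{i}}A\right)\left(e_{i},e_{j}\right)\right]\left\langle \bar{\nabla}\bar{f},e_{j}\right\rangle $. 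For the last one I would invoke the contracted Codazzi identity $\sum_{i}\left(\nabla_{e_{i}}A\right)\left(e_{i},e_{j}\right)=e_{j}\left(H\right)-\bar{R}ic\left(e_{j},N\right)$, which turns it into $-\left\langle \nabla H,\bar{\nabla}\bar{f}\right\rangle +\left\langle \left(\bar{R}ic\left(N\right)\right)^{T},\bar{\nabla}\bar{f}\right\rangle $, supplying the remaining parts of $-\left\langle B^{T},\bar{\nabla}\bar{f}\right\rangle $. Finally I would dispose of $g\Delta H$ by substituting $\Delta H=B^{N}+H\left|A\right|^{2}-H\bar{R}ic\left(N,N\right)$, producing $gB^{N}=\left\langle B^{N}N,\bar{\nabla}\bar{f}\right\rangle $ together with $gH\left|A\right|^{2}$, which cancels $H\cdot\left(-\left|A\right|^{2}g\right)$, and $-gH\bar{R}ic\left(N,N\right)$.

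At this stage every term matches the target except that $\Delta g$ carries $\sum_{i}\left(\bar{\nabla}_{e_{i}}\bar{\nabla}\bar{\nabla}\bar{f}\right)\left(e_{i},N\right)$ whereas the statement wants $tr_{M}\left(\bar{\nabla}_{N}\bar{\nabla}\bar{\nabla}\bar{f}\right)=\sum_{i}\left(\bar{\nabla}_{N}\bar{\nabla}\bar{\nabla}\bar{f}\right)\left(e_{i},e_{i}\right)$, and there are leftover Ricci terms. This reconciliation is the main obstacle, and it is purely the commutation of covariant derivatives. Writing $T=\bar{\nabla}\bar{\nabla}\bar{f}$, using its symmetry together with the Ricci identity $\left(\bar{\nabla}_{X}T\right)\left(Y,Z\right)-\left(\bar{\nabla}_{Y}T\right)\left(X,Z\right)=-\left\langle \bar{\nabla}\bar{f},\bar{R}iem\left(X,Y\right)Z\right\rangle $, I would obtain
\[
\sum_{i}\left(\bar{\nabla}_{e_{i}}\bar{\nabla}\bar{\nabla}\bar{f}\right)\left(e_{i},N\right)-tr_{M}\left(\bar{\nabla}_{N}\bar{\nabla}\bar{\nabla}\bar{f}\right)=-\sum_{i}\left\langle \bar{\nabla}\bar{f},\bar{R}iem\left(e_{i},N\right)e_{i}\right\rangle .
\]
Splitting $\bar{\nabla}\bar{f}=\left(\bar{\nabla}\bar{f}\right)^{T}+gN$ and contracting with the traces $\sum_{i}\left\langle \bar{R}iem\left(e_{i},N\right)e_{i},N\right\rangle =-\bar{R}ic\left(N,N\right)$ and $\sum_{i}\left\langle \bar{R}iem\left(e_{i},N\right)e_{i},e_{j}\right\rangle =-\bar{R}ic\left(N,e_{j}\right)$ turns the right-hand side into $g\bar{R}ic\left(N,N\right)+\left\langle \left(\bar{R}ic\left(N\right)\right)^{T},\bar{\nabla}\bar{f}\right\rangle $. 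Multiplying by $H$, the term $gH\bar{R}ic\left(N,N\right)$ cancels the $-gH\bar{R}ic\left(N,N\right)$ from $B^{N}$, and the tangential Ricci term combines with the one already isolated from Codazzi and with the $2H\left(\bar{R}ic\left(N\right)\right)^{T}$ piece of $B^{T}$, so that all curvature contributions collapse precisely into $\left\langle B^{N}N-B^{T},\bar{\nabla}\bar{f}\right\rangle $. I expect the genuine difficulty to lie entirely in the sign and convention bookkeeping of this third-derivative commutation and of the two curvature traces; once those are pinned down against the paper's conventions, the terms assemble exactly as claimed.
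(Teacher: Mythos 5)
Your proposal is correct and follows essentially the same route as the paper's proof: expand $\Delta\left(\left\langle \bar{\nabla}\bar{f},N\right\rangle H\right)$ by the product rule, compute the gradient and Laplacian of $\left\langle \bar{\nabla}\bar{f},N\right\rangle$ via the Gauss and Weingarten formulas, commute the third covariant derivative with the Ricci identity, handle the divergence of $A$ with Codazzi, and absorb all curvature terms into $B^{N}$ and $B^{T}$ exactly as you describe. The only cosmetic differences are that you invoke the contracted Codazzi identity directly where the paper applies Codazzi pointwise and then traces, and you defer the commutation step to the end; the sign bookkeeping you flag does close up under the paper's conventions.
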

\begin{proof}
By Lemma \ref{lem:hessianrestriction} we have
\[
\Delta f=tr_{M}\bar{\nabla}\bar{\nabla}\bar{f}+\left\langle \bar{\nabla}\bar{f},N\right\rangle H.
\]
Taking the Laplacian we obtain
\[
\Delta\Delta f=\Delta\left(tr_{M}\bar{\nabla}\bar{\nabla}\bar{f}\right)+H\Delta\left\langle \bar{\nabla}\bar{f},N\right\rangle +2\left\langle \nabla\left\langle \bar{\nabla}\bar{f},N\right\rangle ,\nabla H\right\rangle +\left\langle \bar{\nabla}\bar{f},N\right\rangle \Delta H.
\]
Let $\left\{ e_{i}\right\} $ be a local orthonormal frame on $M$.
We have
\begin{align*}
e_{i}\left\langle \bar{\nabla}\bar{f},N\right\rangle  & =\left\langle \bar{\nabla}_{e_{i}}\bar{\nabla}\bar{f},N\right\rangle +\left\langle \bar{\nabla}\bar{f},\bar{\nabla}_{e_{i}}N\right\rangle \\
 & =\bar{\nabla}\bar{\nabla}\bar{f}\left(e_{i},N\right)-A\left(e_{i},\nabla f\right).
\end{align*}
We find that
\begin{align*}
\Delta\Delta f & =\Delta\left(tr_{M}\bar{\nabla}\bar{\nabla}\bar{f}\right)+H\Delta\left\langle \bar{\nabla}\bar{f},N\right\rangle +2\bar{\nabla}\bar{\nabla}\bar{f}\left(\nabla H,N\right)\\
 & -2A\left(\nabla H,\nabla f\right)+\left\langle \bar{\nabla}\bar{f},N\right\rangle \Delta H.
\end{align*}
We can assume that $\nabla_{e_{i}}e_{j}=0$ at a fixed point of $M$.
At this point we have
\begin{align*}
\Delta\left\langle \bar{\nabla}\bar{f},N\right\rangle  & =\sum_{i}e_{i}e_{i}\left\langle \bar{\nabla}\bar{f},N\right\rangle \\
 & =\sum_{i}e_{i}\left(\bar{\nabla}\bar{\nabla}\bar{f}\left(e_{i},N\right)\right)-\sum_{i}e_{i}\left(A\left(e_{i},\nabla f\right)\right)\\
 & =\sum_{i}\left(\bar{\nabla}_{e_{i}}\bar{\nabla}\bar{\nabla}\bar{f}\right)\left(e_{i},N\right)+\sum_{i}\bar{\nabla}\bar{\nabla}\bar{f}\left(\left(\bar{\nabla}_{e_{i}}e_{i}\right)^{\perp},N\right)\\
 & +\sum_{i}\bar{\nabla}\bar{\nabla}\bar{f}\left(e_{i},\bar{\nabla}_{e_{i}}N\right)-\sum_{i}\left(\nabla_{e_{i}}A\right)\left(e_{i},\nabla f\right)-\sum_{i}A\left(e_{i},\nabla_{e_{i}}\nabla f\right)\\
 & =\sum_{i}\left(\bar{\nabla}_{e_{i}}\bar{\nabla}\bar{\nabla}\bar{f}\right)\left(e_{i},N\right)+H\bar{\nabla}\bar{\nabla}\bar{f}\left(N,N\right)-\left\langle \bar{\nabla}\bar{\nabla}\bar{f},A\right\rangle \\
 & -\sum_{i}\left(\nabla_{e_{i}}A\right)\left(e_{i},\nabla f\right)-\left\langle \nabla\nabla f,A\right\rangle .
\end{align*}
By Lemma \ref{lem:hessianrestriction} we have
\begin{align*}
\Delta\left\langle \bar{\nabla}\bar{f},N\right\rangle  & =\sum_{i}\left(\bar{\nabla}_{e_{i}}\bar{\nabla}\bar{\nabla}\bar{f}\right)\left(e_{i},N\right)+H\bar{\nabla}\bar{\nabla}\bar{f}\left(N,N\right)-2\left\langle \bar{\nabla}\bar{\nabla}\bar{f},A\right\rangle \\
 & -\sum_{i}\left(\nabla_{e_{i}}A\right)\left(e_{i},\nabla f\right)-\left\langle \bar{\nabla}\bar{f},N\right\rangle \left|A\right|^{2}.
\end{align*}
By the Ricci identity and the Codazzi equation we have
\begin{align*}
\left(\bar{\nabla}_{e_{i}}\bar{\nabla}\bar{\nabla}\bar{f}\right)\left(e_{i},N\right) & =\left(\bar{\nabla}_{e_{i}}\bar{\nabla}\bar{\nabla}\bar{f}\right)\left(N,e_{i}\right)\\
 & =\left(\bar{\nabla}_{N}\bar{\nabla}\bar{\nabla}\bar{f}\right)\left(e_{i},e_{i}\right)-\left\langle \bar{R}iem\left(e_{i},N\right)e_{i},\bar{\nabla}\bar{f}\right\rangle ,
\end{align*}
and
\begin{align*}
\left(\nabla_{e_{i}}A\right)\left(e_{i},\nabla f\right) & =\left(\nabla_{e_{i}}A\right)\left(\nabla f,e_{i}\right)\\
 & =\left(\nabla_{\nabla f}A\right)\left(e_{i},e_{i}\right)+\left\langle \bar{R}iem\left(e_{i},\nabla f\right)e_{i},N\right\rangle .
\end{align*}
We find that
\begin{align*}
\Delta\left\langle \bar{\nabla}\bar{f},N\right\rangle  & =tr_{M}\left(\bar{\nabla}_{N}\bar{\nabla}\bar{\nabla}\bar{f}\right)+\bar{R}ic\left(\bar{\nabla}\bar{f},N\right)+H\bar{\nabla}\bar{\nabla}\bar{f}\left(N,N\right)-2\left\langle \bar{\nabla}\bar{\nabla}\bar{f},A\right\rangle \\
 & -\left\langle \nabla f,\nabla H\right\rangle +\bar{R}ic\left(\nabla f,N\right)-\left\langle \bar{\nabla}\bar{f},N\right\rangle \left|A\right|^{2}.
\end{align*}
We conclude that
\begin{align*}
\Delta\Delta f & =\Delta\left(tr_{M}\bar{\nabla}\bar{\nabla}\bar{f}\right)+Htr_{M}\left(\bar{\nabla}_{N}\bar{\nabla}\bar{\nabla}\bar{f}\right)+H\bar{R}ic\left(\bar{\nabla}\bar{f},N\right)+H^{2}\bar{\nabla}\bar{\nabla}\bar{f}\left(N,N\right)\\
 & -2H\left\langle \bar{\nabla}\bar{\nabla}\bar{f},A\right\rangle -H\left\langle \nabla f,\nabla H\right\rangle +H\bar{R}ic\left(\nabla f,N\right)-H\left\langle \bar{\nabla}\bar{f},N\right\rangle \left|A\right|^{2}\\
 & +2\bar{\nabla}\bar{\nabla}\bar{f}\left(\nabla H,N\right)-2A\left(\nabla H,\nabla f\right)+\left\langle \bar{\nabla}\bar{f},N\right\rangle \Delta H\\
 & =\Delta\left(tr_{M}\bar{\nabla}\bar{\nabla}\bar{f}\right)+Htr_{M}\left(\bar{\nabla}_{N}\bar{\nabla}\bar{\nabla}\bar{f}\right)+H^{2}\bar{\nabla}\bar{\nabla}\bar{f}\left(N,N\right)-2H\left\langle \bar{\nabla}\bar{\nabla}\bar{f},A\right\rangle \\
 & +2\bar{\nabla}\bar{\nabla}\bar{f}\left(\nabla H,N\right)+\left\langle \left(\Delta H-H\left|A\right|^{2}\right)N,\bar{\nabla}\bar{f}\right\rangle -\left\langle 2A\left(\nabla H\right)+H\nabla H,\bar{\nabla}\bar{f}\right\rangle \\
 & +H\bar{R}ic\left(\bar{\nabla}\bar{f},N\right)+H\bar{R}ic\left(\nabla f,N\right).
\end{align*}
The result follows since
\[
\Delta H-H\left|A\right|^{2}=B^{N}-H\bar{R}ic\left(N,N\right),
\]
\[
2A\left(\nabla H\right)+H\nabla H=B^{T}+2H\left(\bar{R}ic\left(N\right)\right)^{T},
\]
and
\[
\bar{R}ic\left(\bar{\nabla}\bar{f},N\right)=\bar{R}ic\left(\nabla f,N\right)+\left\langle \bar{\nabla}\bar{f},N\right\rangle \bar{R}ic\left(N,N\right).
\]
\end{proof}

\section{Proof of the main result}

Proof of Theorem \ref{thm:main}.
\begin{proof}
Since $M$ is biharmonic, by Theorem \ref{thm:biharmonicequations}
we have
\[
B^{N}=0\,\,\,and\,\,\,B^{T}=0.
\]
Let $\bar{f}=\left\langle X,V\right\rangle |S^{n+1}$, where $X$
is the position vector of $R^{n+2}$ and $V$ is a fixed vector in
$R^{n+2}$. We have
\[
\bar{\nabla}\bar{\nabla}\bar{f}=-\bar{f}\bar{g},
\]
and
\[
\bar{\nabla}\bar{\nabla}\bar{\nabla}\bar{f}=-d\bar{f}\otimes\bar{g},
\]
where $\bar{g}=g_{S^{n+1}}$. Let $f=\left\langle X,V\right\rangle |M$.
By Theorem \ref{thm:bilaplacianrestriction} we have
\[
\Delta\Delta f=\Delta\left(tr_{M}\bar{\nabla}\bar{\nabla}\bar{f}\right)-nH\left\langle \bar{\nabla}\bar{f},N\right\rangle +H^{2}f.
\]
Integrating and using the divergence theorem we obtain
\[
0=0-\int_{M}nH\left\langle \bar{\nabla}\bar{f},N\right\rangle +\int_{M}H^{2}f.
\]
By Lemma \ref{lem:hessianrestriction} we have
\[
\Delta f=-nf+\left\langle \bar{\nabla}\bar{f},N\right\rangle H.
\]
Multiplying by $n$, integrating and using the divergence theorem
we obtain
\[
0=-\int_{M}n^{2}f+\int_{M}nH\left\langle \bar{\nabla}\bar{f},N\right\rangle .
\]
By the above equations we have
\[
\int_{M}\left(n^{2}-H^{2}\right)f=0.
\]
Since $M$ lies in a closed hemisphere of $S^{n+1}$ we can find $V$
in $R^{n+2}$ such that $f\geq0$. Since $n^{2}-H^{2}$ does not change
sign we have
\[
\left(n^{2}-H^{2}\right)f=0.
\]
First suppose $H^{2}=n^{2}$ everywhere. In this case by Theorem 2.10
in \cite{BMO2008} we conclude that $M$ is the small hypersphere
$S^{n}\left(1/\sqrt{2}\right)$. Now suppose $H^{2}\neq n^{2}$ at
some point of $M$. In this case we have $f=0$ on an open subset
of $M$, that is, an open subset of $M$ lies in the equator of the
hemisphere. Since this subset is minimal, by Theorem 1.3 in \cite{BO2019}
we find that the whole $M$ is minimal. We conclude that $M$ is the
equator of the hemisphere.
\end{proof}

\section{Further applications}

Let $M^{n}$ be a hypersurface in $S^{n+1}$ and let 
\[
x_{i}=\left\langle X,E_{i}\right\rangle |M^{n},
\]
where $X$ is the position vector of $R^{n+2}$ and $\left\{ E_{i}\right\} _{i=1}^{n+2}$
is the standard basis of $R^{n+2}$.

Takahashi \cite{T1966} proved that $M^{n}$ is minimal if and only
if $\Delta x_{i}=-nx_{i}$ for $1\leq i\leq n+2$. As an application
of Theorem \ref{thm:bilaplacianrestriction} we obtain a similar result
for biharmonic hypersurfaces.
\begin{thm}
\label{thm:biharmonicsphere}Let $M^{n}$ be a hypersurface in $S^{n+1}$.
Then $M^{n}$ is biharmonic if and only if
\[
\Delta\Delta x_{i}=\left(n^{2}+H^{2}\right)x_{i}-2nH\left\langle N,E_{i}\right\rangle \,\,\,for\,\,\,1\leq i\leq n+2.
\]
\end{thm}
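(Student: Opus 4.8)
The plan is to apply Theorem \ref{thm:bilaplacianrestriction} to the restriction $\bar{f}=\langle X,E_{i}\rangle|S^{n+1}$, reusing the ambient identities already established in the proof of Theorem \ref{thm:main}, namely $\bar{\nabla}\bar{\nabla}\bar{f}=-\bar{f}\bar{g}$ and $\bar{\nabla}\bar{\nabla}\bar{\nabla}\bar{f}=-d\bar{f}\otimes\bar{g}$, where $\bar{g}=g_{S^{n+1}}$. With these every term of the bilaplacian formula collapses to a multiple of $f=x_{i}$, of $\langle\bar{\nabla}\bar{f},N\rangle$, or of $H$. First I would record that $tr_{M}\bar{\nabla}\bar{\nabla}\bar{f}=-nf$ and $\bar{\nabla}\bar{\nabla}\bar{f}\left(N,N\right)=-f$ (since $\bar{g}(N,N)=1$), that $\langle\bar{\nabla}\bar{\nabla}\bar{f},A\rangle=-fH$ (since $\langle\bar{g},A\rangle=tr_{M}A=H$), and that $\bar{\nabla}\bar{\nabla}\bar{f}\left(\nabla H,N\right)=0$ because $\nabla H\in TM$ is orthogonal to $N$. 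The third covariant derivative contributes $tr_{M}\left(\bar{\nabla}_{N}\bar{\nabla}\bar{\nabla}\bar{f}\right)=-n\langle\bar{\nabla}\bar{f},N\rangle$.

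Next I would substitute these, together with the identity $\Delta f=-nf+H\langle\bar{\nabla}\bar{f},N\rangle$ furnished by Lemma \ref{lem:hessianrestriction}, to rewrite $\Delta\left(tr_{M}\bar{\nabla}\bar{\nabla}\bar{f}\right)=-n\Delta f=n^{2}f-nH\langle\bar{\nabla}\bar{f},N\rangle$. The decisive simplification is the identity $\langle\bar{\nabla}\bar{f},N\rangle=\langle N,E_{i}\rangle$: indeed $\bar{\nabla}\bar{f}=E_{i}-\langle E_{i},X\rangle X$, and $N$ is tangent to $S^{n+1}$, hence orthogonal to the position vector $X$. Collecting the terms should give
\[
\Delta\Delta x_{i}=\left(n^{2}+H^{2}\right)x_{i}-2nH\langle N,E_{i}\rangle+\langle B^{N}N-B^{T},\bar{\nabla}\bar{f}\rangle,
\]
so the claimed equation is equivalent to the vanishing, for every $i$, of the residual term $\langle B^{N}N-B^{T},\bar{\nabla}\bar{f}\rangle$.

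The final step is to convert this into the stated equivalence. Since $B^{N}N-B^{T}$ lies in $TS^{n+1}=X^{\perp}$, the component of $E_{i}$ along $X$ is annihilated, and the residual term equals the $i$-th Euclidean coordinate $\langle B^{N}N-B^{T},E_{i}\rangle$ of the vector $B^{N}N-B^{T}\in R^{n+2}$. The forward implication is then immediate: if $M$ is biharmonic, Theorem \ref{thm:biharmonicequations} gives $B^{N}=0$ and $B^{T}=0$, so the residual vanishes for all $i$. For the converse, if the displayed equation holds for $i=1,\dots,n+2$ then every coordinate of $B^{N}N-B^{T}$ vanishes, whence $B^{N}N-B^{T}=0$ in $R^{n+2}$. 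The step I expect to require the most care is this last separation: $B^{N}N$ is normal to $M$ while $B^{T}$ is tangent, so the orthogonality of $N$ and $TM$ forces $B^{N}=0$ and $B^{T}=0$ independently, and Theorem \ref{thm:biharmonicequations} then identifies $M$ as biharmonic.
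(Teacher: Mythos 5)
Your proposal is correct and follows essentially the same route as the paper: apply Theorem \ref{thm:bilaplacianrestriction} to $\bar{f}=\left\langle X,E_{i}\right\rangle$, use $\bar{\nabla}\bar{\nabla}\bar{f}=-\bar{f}\bar{g}$ and $\bar{\nabla}\bar{\nabla}\bar{\nabla}\bar{f}=-d\bar{f}\otimes\bar{g}$ together with Lemma \ref{lem:hessianrestriction} to reduce everything to $\left(n^{2}+H^{2}\right)x_{i}-2nH\left\langle N,E_{i}\right\rangle+\left\langle B^{N}N-B^{T},E_{i}\right\rangle$, and then invoke Theorem \ref{thm:biharmonicequations}. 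Your explicit justification of the converse (all Euclidean coordinates of $B^{N}N-B^{T}$ vanish, and the normal/tangent split forces $B^{N}=0$ and $B^{T}=0$ separately) is exactly the step the paper leaves implicit in ``the result follows.''
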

\begin{proof}
Let $\bar{f}=\left\langle X,E_{i}\right\rangle |S^{n+1}$. We have
\[
\bar{\nabla}\bar{\nabla}\bar{f}=-\bar{f}\bar{g},
\]
and
\[
\bar{\nabla}\bar{\nabla}\bar{\nabla}\bar{f}=-d\bar{f}\otimes\bar{g},
\]
where $\bar{g}=g_{S^{n+1}}$. Let $f=\left\langle X,E_{i}\right\rangle |M$.
By Theorem \ref{thm:bilaplacianrestriction} and Lemma \ref{lem:hessianrestriction}
we have
\[
\Delta\Delta f=-n\Delta f-nH\left\langle \bar{\nabla}\bar{f},N\right\rangle +H^{2}f+\left\langle B^{N}N-B^{T},\bar{\nabla}\bar{f}\right\rangle ,
\]
and
\[
\Delta f=-nf+\left\langle \bar{\nabla}\bar{f},N\right\rangle H.
\]
We find that
\[
\Delta\Delta f=\left(n^{2}+H^{2}\right)f-2nH\left\langle \bar{\nabla}\bar{f},N\right\rangle +\left\langle B^{N}N-B^{T},\bar{\nabla}\bar{f}\right\rangle .
\]
Since $\bar{\nabla}\bar{f}=proj_{TS^{n+1}}E_{i}$ we have
\[
\Delta\Delta x_{i}=\left(n^{2}+H^{2}\right)x_{i}-2nH\left\langle N,E_{i}\right\rangle +\left\langle B^{N}N-B^{T},E_{i}\right\rangle \,\,\,for\,\,\,1\leq i\leq n+2.
\]
The result follows from Theorem \ref{thm:biharmonicequations}.
\end{proof}
This result was obtained in a completely different way by Caddeo-Montaldo-Oniciuc
(Proposition 4.1 in \cite{CMO2002}). The statement is different but
equivalent.

As an application of Theorem \ref{thm:biharmonicsphere} we obtain
a sufficient condition for a biharmonic hypersurface in $S^{n+1}$
to be minimal.
\begin{cor}
Let $M^{n}$ be a biharmonic hypersurface in $S^{n+1}$. If there
exists a function $\phi$ on $M^{n}$ such that
\[
\Delta\Delta x_{i}=\phi x_{i}\,\,\,for\,\,\,1\leq i\leq n+2,
\]
then $M^{n}$ is minimal.
\end{cor}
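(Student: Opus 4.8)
The plan is to leverage Theorem~\ref{thm:biharmonicsphere}, which gives an exact expression for $\Delta\Delta x_i$ in terms of the geometric data of $M^n$. By hypothesis we have an auxiliary function $\phi$ satisfying $\Delta\Delta x_i=\phi x_i$ for every $i$, so equating this with the biharmonic formula yields
\[
\phi x_i=\left(n^2+H^2\right)x_i-2nH\left\langle N,E_i\right\rangle\qquad\text{for }1\leq i\leq n+2.
\]
Rearranging, this says
\[
\left(\phi-n^2-H^2\right)x_i=-2nH\left\langle N,E_i\right\rangle\qquad\text{for }1\leq i\leq n+2.
\]
The key observation is that $x_i=\langle X,E_i\rangle$ and $\langle N,E_i\rangle$ are the $i$-th Euclidean coordinates of the position vector $X$ and the normal vector $N$, respectively. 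First I would promote these $n+2$ scalar identities to a single vector identity in $R^{n+2}$: since the relation holds for each basis vector $E_i$, it holds for $X$ and $N$ as vectors, giving
\[
\left(\phi-n^2-H^2\right)X=-2nHN.
\]

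Next I would exploit orthogonality. Because $M^n$ sits in $S^{n+1}\subset R^{n+2}$, the position vector $X$ is a unit vector, and the outward normal $N$ of $M^n$ in $S^{n+1}$ is tangent to the sphere, hence orthogonal to $X$ in $R^{n+2}$. Taking the inner product of the displayed vector identity with $X$ kills the right-hand side (since $\langle N,X\rangle=0$) and leaves
\[
\left(\phi-n^2-H^2\right)\left|X\right|^2=0,
\]
so $\phi=n^2+H^2$ at every point of $M^n$. Substituting this back into the vector identity forces $2nHN=0$, and since $N$ is a unit vector and $n\geq1$, we conclude $H=0$ everywhere, i.e.\ $M^n$ is minimal.

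The argument is essentially immediate once Theorem~\ref{thm:biharmonicsphere} is in hand; the only point requiring care is the passage from the $n+2$ coordinate equations to the vector identity and the use of the geometric fact $\langle X,N\rangle=0$, which holds precisely because $N$ is the normal of the hypersurface \emph{inside} the sphere (tangent to $S^{n+1}$) rather than the normal inside $R^{n+2}$. I do not anticipate a genuine obstacle here, since the structure of the hypothesis was evidently designed to match the output of the preceding theorem; the main thing to verify is that the coordinate-wise identity legitimately assembles into the vector equation, which it does because $\{E_i\}$ is the standard basis and both $X$ and $N$ are expanded in that basis.
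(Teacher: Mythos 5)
Your argument is correct and follows the paper's proof essentially verbatim: both equate the hypothesis with the formula from Theorem \ref{thm:biharmonicsphere}, assemble the coordinate identities into the vector equation $2nHN=\left(n^{2}+H^{2}-\phi\right)X$, and conclude $H=0$ from the orthogonality of $N$ (tangent to $S^{n+1}$) and $X$ (normal to $S^{n+1}$). Your write-up merely makes the final orthogonality step more explicit than the paper does.
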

\begin{proof}
By Theorem \ref{thm:biharmonicsphere} we have
\[
\phi\left\langle X,E_{i}\right\rangle =\left(n^{2}+H^{2}\right)\left\langle X,E_{i}\right\rangle -2nH\left\langle N,E_{i}\right\rangle =\,\,\,for\,\,\,1\leq i\leq n+2.
\]
We find that
\[
2nHN=\left(n^{2}+H^{2}-\phi\right)X.
\]
The result follows since $N$ is tangent to $S^{n+1}$ and $X$ is
normal to $S^{n+1}$.
\end{proof}

\lyxaddress{Departamento de Matemática, Universidade Federal do Espírito Santo,
Vitória, Brazil. E-mail: matheus.vieira@ufes.br}
\end{document}